\documentclass[12pt,a4paper]{article}

\usepackage[ngerman,english]{babel}
\usepackage[utf8]{inputenc}
\usepackage[T1]{fontenc}
\usepackage{amsmath}
\usepackage{amssymb}
\usepackage{amsthm}
\usepackage{mathtools}
\usepackage{paralist}
\usepackage{tikz}
\usepackage{graphicx}
\usepackage{hyperref}
\usepackage[capitalise]{cleveref}

\begin{document}

\theoremstyle{plain}
\newtheorem{theorem}{Theorem}[section]
\newtheorem{definition}[theorem]{Definition}
\newtheorem{lemma}[theorem]{Lemma}
\newtheorem{prop}[theorem]{Proposition}
\newtheorem{corollary}[theorem]{Corollary}
\newtheorem{conjecture}[theorem]{Conjecture}
\theoremstyle{remark}
\newtheorem{remark}[theorem]{Remark}
\newtheorem{example}[theorem]{Example}

\newcommand{\reg}{\mathrm{reg}}
\newcommand{\Ass}{\mathrm{Ass}}
\newcommand{\charakt}{\mathrm{char}}
\newcommand{\diag}{\mathrm{diag}}
\newcommand{\Tor}{\mathrm{Tor}}
\newcommand{\im}{\mathrm{im}}
\newcommand{\coker}{\mathrm{coker}}
\newcommand{\id}{\mathrm{id}}
\newcommand{\length}{\mathrm{length}}
\newcommand{\LM}{\mathrm{LM}}
\newcommand{\LT}{\mathrm{LT}}
\newcommand{\cone}{\mathrm{cone}}
\newcommand{\ord}{\mathrm{ord}}
\newcommand{\Quot}{\mathrm{Quot}}
\newcommand{\Spec}{\mathrm{Spec}}
\newcommand{\height}{\mathrm{ht}}
\newcommand{\rank}{\mathrm{rank}}
\newcommand{\Ann}{\mathrm{Ann}}
\newcommand{\reynolds}{\mathcal{R}}
\newcommand{\maxId}{\mathfrak{m}}
\newcommand{\maxIdn}{\mathfrak{n}}
\newcommand{\primId}{\mathfrak{p}}

\title{A degree bound for rings of arithmetic invariants}
\author{David Mundelius \\ \small{Technische Universität München, Zentrum Mathematik - M11} \\ \small{Boltzmannstraße 3, 85748 Garching, Germany} \\ \small{\texttt{david.mundelius@tum.de}}}
\date{May 27, 2022}
\maketitle

\begin{abstract}
Consider a Noetherian domain $R$ and a finite group $G \subseteq Gl_n(R)$. We prove that if the ring of invariants $R[x_1, \ldots, x_n]^G$ is a Cohen-Macaulay ring, then it is generated as an $R$-algebra by elements of degree at most $\max(|G|,n(|G|-1))$. As an intermediate result we also show that if $R$ is a Noetherian local ring with infinite residue field then such a ring of invariants of a finite group $G$ over $R$ contains a homogeneous system of parameters consisting of elements of degree at most $|G|$.
\end{abstract}

\noindent \textbf{Keywords:} invariant theory, degree bound, system of parameters, Castelnuovo-Mumford regularity



\section*{Introduction}

A celebrated theorem of Symonds \cite{symondsdb1,symondsdb2} states that if $K$ is an arbitrary field and $G \subseteq Gl_n(K)$ is a finite subgroup, then the ring of invariants $K[x_1, \ldots, x_n]^G$ is generated as a $K$-algebra by elements of degree at most $\max(|G|,n(|G|-1))$. This result had been proved earlier in unpublished work of Abraham Broer under the additional assumption that $K[x_1, \ldots, x_n]^G$ is a Cohen-Macaulay ring, see \cite[Theorem 3.9.8]{dked1}. The main result of this article is a generalization of Broers result to the situation where the field $K$ is replaced by an arbitrary Noetherian integral domain. Some results regarding the question when rings of invariants over $\mathbb{Z}$ are Cohen-Macaulay rings can be found in \cite{almuhaimeedcm}.

In this article all rings are assumed to be commutative, all graded rings are assumed to be $\mathbb{N}$-graded, and by a graded $R$-algebra for some ring $R$ we mean a graded ring $S=\bigoplus_{i \in \mathbb{N}} S_i$ with $S_0 \cong R$. For a ring $R$ and a subgroup $G \subseteq Gl_n(R)$ we always consider the action of $G$ on $R[x_1, \ldots, x_n]$ by $\sigma(f)=f(\sigma^{-1}(x_1, \ldots, x_n))$ for $f \in R[x_1, \ldots, x_n]$ and $\sigma \in G$.

In \cref{SectionReduct} some reduction results are given, which will later allow us to restrict ourselves to the case where $R$ is a Noetherian local domain with infinite residue field. Under this assumption we prove in \cref{SectionParaSys} that the ring of invariants always contains a homogeneous system of parameters which consists of elements of degree at most $|G|$. All results of these first two sections hold without the assumption that the ring of invariants is a Cohen-Macaulay ring.

In order to prove the main result we then show that, if $f_1, \ldots, f_n$ is such a system of parameters and $A \coloneqq R[f_1, \ldots, f_n]$, then under the given assumptions the ring of invariants is generated as an $A$-module by elements of degree at most $n \cdot (|G|-1)$. As in the proof of Symonds' theorem this is done by showing that the Castelnuovo-Mumford regularity of the ring of invariants is at most zero. In \cref{SectionLocCohom} we study the local cohomology modules involed in the definition of Castelnuovo-Mumford regularity; for this part the assumption that the ring of invariants is a Cohen-Macaulay ring is essential. Finally, in \cref{SectionMain} we use this to prove the aforementioned bound on the Castelnuovo-Mumford regularity and then derive the main result from that.

\section*{Acknowledgement}

I wish to thank Gregor Kemper for many helpful conversations.

\section{Reductions} \label{SectionReduct}

The following basic lemma will be used several times within this article:

\begin{lemma} \label{InvRingFlat}
Let $R$ be a ring, $R'$ a flat $R$-algebra, and $G \subseteq Gl_n(R)$ a finite subgroup. Then $R'[x_1, \ldots, x_n]^G=R[x_1, \ldots, x_n]^G \otimes_R R'$.
\end{lemma}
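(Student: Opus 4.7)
The plan is to realize the invariant submodule as the kernel of a suitable $R$-linear map and then exploit the flatness of $R'$ to commute this kernel with base change.

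First I would observe that $M \coloneqq R[x_1, \ldots, x_n]$ is a free $R$-module, with the monomials $x_1^{a_1} \cdots x_n^{a_n}$ as a basis, so the natural $R'$-algebra map $M \otimes_R R' \to R'[x_1, \ldots, x_n]$ sending $f \otimes r'$ to $r'f$ is an isomorphism. Moreover, the action of each $\sigma \in G \subseteq Gl_n(R)$ on $M$ is $R$-linear, and under the above isomorphism the induced action $\sigma \otimes \id$ on $M \otimes_R R'$ coincides with the action of $\sigma$ viewed as an element of $Gl_n(R')$ via the structure map $R \to R'$. This identification is the bookkeeping I will need to transport the invariants across the tensor product.

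Next I would encode invariants as a kernel. The module $M^G$ is by definition the equalizer of the identity and the maps $\sigma \colon M \to M$ for $\sigma \in G$, and since $G$ is finite this can be rewritten as the exact sequence of $R$-modules
\begin{equation*}
0 \longrightarrow M^G \longrightarrow M \xrightarrow{\;\phi\;} \bigoplus_{\sigma \in G} M, \qquad \phi(f) = (\sigma(f)-f)_{\sigma \in G}.
\end{equation*}
Because $R'$ is flat over $R$, applying $-\otimes_R R'$ preserves this left exact sequence, yielding
\begin{equation*}
0 \longrightarrow M^G \otimes_R R' \longrightarrow M \otimes_R R' \xrightarrow{\;\phi \otimes \id\;} \bigoplus_{\sigma \in G} (M \otimes_R R').
\end{equation*}

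Finally I would identify the terms. Using the isomorphism $M \otimes_R R' \cong R'[x_1,\ldots,x_n]$ and the compatibility of the $G$-actions noted above, the map $\phi \otimes \id$ becomes the analogous map $\phi'$ for $R'[x_1,\ldots,x_n]$, whose kernel is exactly $R'[x_1,\ldots,x_n]^G$. Combining the two identifications gives the asserted natural isomorphism $R[x_1,\ldots,x_n]^G \otimes_R R' \cong R'[x_1,\ldots,x_n]^G$.

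I do not expect a serious obstacle here; the only point that requires a little care is the verification that the $G$-action obtained by base change really is the action of $G$ viewed inside $Gl_n(R')$, which is a direct check on the generators $x_1, \ldots, x_n$ and then extends multiplicatively.
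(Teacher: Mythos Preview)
Your proof is correct and follows essentially the same route as the paper: express $M^G$ as the kernel of the map $\phi(f)=(\sigma(f)-f)_{\sigma\in G}$ into $\bigoplus_{\sigma\in G} M$, tensor with the flat algebra $R'$, and identify the resulting kernel with $R'[x_1,\ldots,x_n]^G$. You are merely a bit more explicit than the paper about the identification $M\otimes_R R'\cong R'[x_1,\ldots,x_n]$ and the compatibility of the $G$-actions, which is fine.
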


\begin{proof}
We write $S \coloneqq R[x_1, \ldots, x_n]$ and $S_{R'}=R'[x_1, \ldots, x_n]$. There is an exact sequence of $R$-modules
\[ 0 \to S^G \to S \stackrel{\varphi}{\to} \bigoplus_{\sigma \in G} S \]
with $\varphi(f)=(\sigma(f)-f)_{\sigma \in G}$ for all $f \in S$. By tensoring this sequence with $R'$ we obtain an exact sequence
\[ 0 \to S^G \otimes_R R' \to S_{R'} \stackrel{\varphi_{R'}}{\to} \bigoplus_{\sigma \in G} S_{R'} \]
where $\varphi_{R'}(f)=(\sigma(f)-f)_{\sigma \in G}$ for all $f \in S_{R'}$. This implies that $S^G \otimes_R R' \cong \ker \varphi_{R'}=S^G_{R'}$.
\end{proof}

For a Noetherian ring $R$ and a finite group $G \subseteq Gl_n(R)$ we define $\beta_R(G)$ to be the smallest integer $k$ such that $R[x_1, \ldots, x_n]^G$ is generated as an $R$-algebra by elements of degree at most $d$. Our first application of \cref{InvRingFlat} is the following result which shows that in the proof of the main theorem we may always replace $R$ by some faithfully flat $R$-algebra:

\begin{lemma} \label{ReductFaithful}
Let $R$ be a ring, $R'$ a faithfully flat $R$-algebra and $G \subseteq Gl_n(R)$ a finite subgroup. Then $\beta_{R'}(G)=\beta_R(G)$.
\end{lemma}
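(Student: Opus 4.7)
The plan is to prove both inequalities $\beta_{R'}(G) \leq \beta_R(G)$ and $\beta_R(G) \leq \beta_{R'}(G)$ separately; the first uses only flatness and base change of generators, while the second uses the full force of faithful flatness together with \cref{InvRingFlat}.

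For the easy direction, if $f_1, \ldots, f_m \in R[x_1, \ldots, x_n]^G$ are homogeneous generators of degree at most $\beta_R(G)$, then by \cref{InvRingFlat} the elements $f_i \otimes 1$ lie in $R'[x_1, \ldots, x_n]^G = R[x_1, \ldots, x_n]^G \otimes_R R'$ and generate it as an $R'$-algebra, because any element of the tensor product is an $R'$-linear combination of elements of the form $f \otimes 1$ with $f \in R[x_1, \ldots, x_n]^G$, and every such $f$ is an $R$-polynomial in the $f_i$. These generators have degree at most $\beta_R(G)$, giving the inequality.

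For the harder direction, set $d \coloneqq \beta_{R'}(G)$ and let $A$ denote the graded $R$-subalgebra of $S^G \coloneqq R[x_1, \ldots, x_n]^G$ generated by the homogeneous invariants of degree at most $d$. The key observation is that \cref{InvRingFlat} is compatible with the grading: the exact sequence in its proof is one of graded modules, so taking degree-$k$ pieces and tensoring with the flat $R$-algebra $R'$ yields $(S^G)_k \otimes_R R' = (S_{R'}^G)_k$ for each $k$. In particular, every homogeneous invariant in $S_{R'}^G$ of degree at most $d$ is an $R'$-linear combination of images of elements in $S^G$ of the same degree, so the $R'$-subalgebra $A'$ of $S_{R'}^G$ generated by invariants of degree at most $d$ equals $A \otimes_R R'$ (as a submodule of $S^G \otimes_R R' = S_{R'}^G$, using flatness for injectivity).

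By the definition of $d = \beta_{R'}(G)$, we have $A' = S_{R'}^G$, so $A \otimes_R R' = S^G \otimes_R R'$. Applying the flat functor $- \otimes_R R'$ to the inclusion $A \hookrightarrow S^G$ then shows $(S^G/A) \otimes_R R' = 0$, and faithful flatness of $R'$ over $R$ forces $S^G/A = 0$. Hence $S^G$ is generated as an $R$-algebra by elements of degree at most $d$, giving $\beta_R(G) \leq \beta_{R'}(G)$. The only mildly subtle point in the argument is the identification $A \otimes_R R' = A'$, which is where the graded refinement of \cref{InvRingFlat} is needed; everything else is a standard descent-by-faithful-flatness argument.
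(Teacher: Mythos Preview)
Your proof is correct and follows essentially the same route as the paper's: both define the subalgebra generated by invariants up to the relevant degree, identify its base change with the corresponding subalgebra over $R'$ via \cref{InvRingFlat}, and then use faithful flatness to descend the equality. The only cosmetic differences are that the paper argues the second inequality by contradiction while you argue it directly, and you are somewhat more explicit about the graded compatibility $(S^G)_k \otimes_R R' = (S_{R'}^G)_k$, which the paper uses implicitly.
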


\begin{proof}
Set again $S \coloneqq R[x_1, \ldots, x_n]$ and $S_{R'} \coloneqq R'[x_1, \ldots, x_n]$. Then by \cref{InvRingFlat} we have $S^G_{R'} = S^G \otimes_R R'$, so $\beta_{R'}(G) \leq \beta_R(G)$. Assume that $\beta_{R'}(G) < \beta_R(G)$ and let $B$ be the subalgebra of $S^G$ generated by all elements of degree at most $d \coloneqq \beta_R(G)-1$. Then $B \otimes_R R'$ is a subalgebra of $S_{R'}^G$ which contains all elements of degree at most $d \geq \beta_{R'}(G)$, so by assumption it is $S_{R'}^G$ itself. Therefore we obtain that $B \otimes_R R'=S_{R'}^G=S^G \otimes_R R'$ and therefore $B=S^G$ since $R'$ is faithfully flat. This contradicts the definition of $B$, so we must have $\beta_{R'}(G)=\beta_R(G)$.
\end{proof}

The next goal is to reduce the main theorem to the case where $R$ is local. For this, we first need the following graded version of Nakayama's Lemma:

\begin{lemma} \label{GradedNakayama}
Let $R$ be a ring, $S$ a finitely generated graded $R$-algebra, and $M=\bigoplus_{i \in \mathbb{N}} M_i$ a nonnegatively graded $S$-module. Let moreover $U \subseteq M$ be a set of homogeneous elements. Then $U$ generates $M$ as an $S$-module if and only if it generates $M/S_+M$ as an $R$-module.
\end{lemma}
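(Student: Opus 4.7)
The forward implication is immediate: if $U$ generates $M$ as an $S$-module, then its image in $M/S_+M$ generates $M/S_+M$ as an $S/S_+$-module, and $S/S_+ \cong R$ because $S$ is a graded $R$-algebra. So the content of the lemma is the reverse implication, and my plan is a standard graded-Nakayama induction on degree. Let $N \subseteq M$ denote the graded $S$-submodule generated by $U$; it is graded because $U$ consists of homogeneous elements. I will show $M_d \subseteq N$ for every $d \geq 0$ by induction on $d$.

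The key observation that makes the induction start cleanly (without any finiteness hypothesis on $M$) is that $S_+ = \bigoplus_{i>0} S_i$ is concentrated in strictly positive degrees, and $M$ is nonnegatively graded. Therefore $(S_+M)_0 = 0$, and the projection $M \twoheadrightarrow M/S_+M$ restricts to an isomorphism $M_0 \xrightarrow{\sim} (M/S_+M)_0$. Hence, for $m \in M_0$, the hypothesis gives a relation $\bar m = \sum_i r_i \bar{u_i}$ with $r_i \in R$ and $u_i \in U$ of degree $0$, and this lifts to an equality $m = \sum_i r_i u_i$ in $M_0$, so $m \in N$.

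For the inductive step, fix $d > 0$ and assume $M_{d'} \subseteq N$ for all $d' < d$. Given $m \in M_d$, write $\bar m = \sum_i r_i \bar{u_i}$ in $M/S_+M$; since both $\bar m$ and the $\bar{u_i}$ are homogeneous and $U$ consists of homogeneous elements, I may assume each $u_i$ has degree $d$ and each $r_i \in R = S_0$. Then $m - \sum_i r_i u_i$ lies in $(S_+M)_d = \sum_{j \geq 1} S_j \cdot M_{d-j}$, so it can be written as a finite sum $\sum_k s_k m_k$ with $s_k \in S_+$ homogeneous of positive degree and $m_k \in M_{d-\deg s_k}$ homogeneous of degree strictly less than $d$. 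By the inductive hypothesis each $m_k$ lies in $N$, hence so does $s_k m_k$, and therefore $m \in N$. Since $M$ is generated as an abelian group by its homogeneous elements, this proves $M = N$.

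I do not anticipate a real obstacle here; the finite generation of $S$ as an $R$-algebra is not actually used, and the argument is the exact graded analogue of the usual Nakayama reduction, with the role of the maximal ideal played by $S_+$ and the role of vanishing in high degree played — dually — by the existence of a lowest degree at which $S_+M$ vanishes.
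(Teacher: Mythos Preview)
Your proof is correct. The paper does not give its own argument for this lemma; it simply cites \cite[Lemma~3.7.1]{dk} and remarks that the proof there, stated over a field, goes through unchanged for an arbitrary base ring~$R$. Your degree-by-degree induction is exactly the standard graded Nakayama argument one expects to find at that reference, so there is nothing to compare.

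Your observation that the finite generation of $S$ over $R$ is never used is correct and worth noting: the only thing that matters is that $S_+$ sits in strictly positive degrees and $M$ is bounded below, so the induction gets off the ground. One small stylistic point: in the base case you might say explicitly that taking the degree-$0$ component of the relation $\bar m = \sum r_i \bar{u_i}$ in the graded $R$-module $M/S_+M$ is what lets you assume all $u_i$ have degree~$0$; you already do the analogous reduction carefully in the inductive step.
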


For the proof of this we refer to \cite[Lemma 3.7.1]{dk}; there it is assumed that $R$ is a field, but this assumption is nowhere used in the proof.

Now we can prove the desired reduction of the main theorem to the case where $R$ is local.

\begin{lemma} \label{ReductLocal}
Let $R$ be a ring and let $G \subseteq Gl_n(R)$ be a finite subgroup. Then 
\[ \beta_R(G) \leq \max \{ \beta_{R_\maxId}(G) | \maxId \subset R \text{ is a maximal ideal} \}. \]
\end{lemma}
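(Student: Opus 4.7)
The plan is to mimic the structure of the proof of Lemma \ref{ReductFaithful}. I set $d := \max\{\beta_{R_\maxId}(G) \mid \maxId \subset R \text{ maximal}\}$, $S := R[x_1,\ldots,x_n]$, and let $B$ be the $R$-subalgebra of $S^G$ generated by all homogeneous invariants of degree at most $d$. It suffices to show $B = S^G$, as this immediately yields $\beta_R(G) \leq d$. The natural strategy is to establish this equality by checking it after localizing at every maximal ideal of $R$.

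The heart of the argument is thus to verify that $B_\maxId = (S^G)_\maxId$ for every maximal $\maxId \subset R$. Applying \cref{InvRingFlat} to the flat $R$-algebra $R_\maxId$ gives $(S^G)_\maxId = (R_\maxId[x_1,\ldots,x_n])^G$, and by the definition of $\beta_{R_\maxId}(G)$ this ring is generated as an $R_\maxId$-algebra by its homogeneous elements of degree at most $\beta_{R_\maxId}(G) \leq d$. It therefore suffices to show that every homogeneous element $h \in (S^G)_\maxId$ of degree $k \leq d$ already lies in $B_\maxId$. To do this I would write $h = g/s$ with $g \in S^G$ and $s \in R \setminus \maxId$, decompose $g$ into homogeneous parts, and use that $h$ is concentrated in degree $k$: after multiplying by a suitable $t \in R \setminus \maxId$ that annihilates all the unwanted components, I may assume $g$ is itself homogeneous of degree $k$. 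Such a $g$ belongs to $B$ by construction of $B$, whence $h = g/s \in B_\maxId$.

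Once the local equality is established for every maximal ideal, the quotient $R$-module $S^G/B$ has zero localization at every maximal ideal and therefore vanishes, giving $B = S^G$. The only delicate point in the whole plan is the homogeneous lifting step: a priori the numerator $g$ need not be homogeneous, but the standard trick that a relation in a localization is an honest equation after clearing by some $t \notin \maxId$, combined with the grading, resolves this cleanly. No appeal to \cref{GradedNakayama} is needed here.
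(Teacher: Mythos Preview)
Your proof is correct, and in fact it is slightly more direct than the paper's. Both arguments introduce the subalgebra $B \subseteq S^G$ generated by homogeneous invariants of degree at most $d$, and both establish the key equality $B \otimes_R R_\maxId = (S^G)\otimes_R R_\maxId$ for every maximal ideal $\maxId$ (the paper states this in one line, while you spell out the homogeneous-lifting step). The difference lies in how the global conclusion $B = S^G$ is drawn: the paper passes to $M \coloneqq S^G/B_+ S^G$, shows that each positive graded piece $M_i$ has vanishing localizations and hence is zero, and then invokes \cref{GradedNakayama}; you instead apply the local--global principle directly to the $R$-module $S^G/B$, avoiding \cref{GradedNakayama} altogether. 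Your route is shorter; the paper's route has the virtue of keeping the argument parallel to other generation statements in the article.

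One remark: the step you flag as ``delicate'' is actually immediate. Since the multiplicative set $R\setminus\maxId$ lies in degree zero, localization respects the grading componentwise, i.e.\ $(S^G)_\maxId = \bigoplus_i \bigl((S^G)_i\bigr)_\maxId$. Hence a homogeneous element of degree $k\le d$ in $(S^G)_\maxId$ automatically lies in $\bigl((S^G)_k\bigr)_\maxId = (B_k)_\maxId \subseteq B_\maxId$, with no need to clear denominators.
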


\begin{proof}
Again set $S \coloneqq R[x_1, \ldots, x_n]$ and $S_{R_\maxId} \coloneqq R_{\maxId}[x_1, \ldots, x_n]$ for every maximal ideal $\maxId \subset R$. Let $B$ be the subalgebra of $S^G$ generated by all elements of degree at most $\max \{ \beta_{R_\maxId}(G) | \maxId \subset R \text{ is a maximal ideal} \}$. By \cref{InvRingFlat} we then have $B \otimes_R R_\maxId=S_{R_\maxId}^G$ for each maximal ideal $\maxId \subset R$. With $M \coloneqq S^G/B_+ S^G$ we have $M \otimes_R R_\maxId \cong S^G_{R_\maxId}/(S^G_{R_\maxId})_+ = R_\maxId$; more precisely, if we view $M$ as a graded module $M=\bigoplus_{i \in \mathbb{N}} M_i$, then $M_i \otimes_R R_\maxId=0$ for all $i>0$. Since this holds for every maximal ideal, we have $M_i=0$ for all $i>0$ and therefore $M=M_0=R$. Now we can apply \cref{GradedNakayama} to see that $S^G$ is generated by elements of degree $0$ as an $B$-module, and therefore $S^G=B$.
\end{proof}

\section{Homogeneous systems of parameters} \label{SectionParaSys}

Let $R$ be a ring and let $S$ be a finitely generated graded $R$-algebra. A sequence $f_1, \ldots, f_n$ of homogeneous elements in $S$ is called a homogeneous system of parameters if $f_1, \ldots, f_n$ are algebraically independent over $R$ and $S$ is finitely generated as a module over $A \coloneqq R[f_1, \ldots, f_n]$. In general, a finitely generated graded algebra does not contain a homogeneous system of parameters, see \cite{be} and the references there. In this section we prove that, if $R$ is a Noetherian local ring, the ring of invariants of every finite subgroup $G \subseteq Gl_n(R)$ contains a homogeneous system of parameters. Moreover, if in addition the residue field of $R$ is infinite, this system of parameters can be chosen to be consisting of elements of degree at most $|G|$. We start with two technical lemmas:

\begin{lemma} \label{ParaSysLemma1}
Let $R$ be a local ring with maximal ideal $\maxId$ and $F \coloneqq R/\maxId$. Moreover, let $S$ be a graded $R$-algebra and $M=\bigoplus_{i \in \mathbb{Z}} M_i$ a graded $S$-module such that each $M_i$ is finitely generated as an $R$-Module. Then for every sequence $g_1, \ldots, g_m$ of homogeneous elements in $M$ the following holds: if the classes $\overline{g}_1, \ldots, \overline{g}_m$ of $g_1, \ldots, g_m$ in $M/\maxId M$ generate $M/\maxId M$ as an $F$-vector space, then $g_1, \ldots, g_m$ generate $M$ as an $R$-module.
\end{lemma}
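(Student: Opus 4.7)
The plan is to reduce to the classical Nakayama lemma applied separately in each graded degree, exploiting the assumption that each $M_i$ is finitely generated as an $R$-module (even though $M$ as a whole need not be).

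First I would let $N \subseteq M$ denote the $R$-submodule generated by $g_1, \ldots, g_m$. The hypothesis that the classes $\overline{g}_1, \ldots, \overline{g}_m$ span $M/\maxId M$ as an $F$-vector space means exactly that every element of $M$ is congruent modulo $\maxId M$ to an $R$-linear combination of the $g_j$'s, i.e.\ $M = N + \maxId M$. Passing to the quotient, this gives $M/N = \maxId (M/N)$. Since the $g_j$ are homogeneous, $N$ is a graded submodule of $M$, hence $M/N$ is a graded $R$-module with $(M/N)_i$ a quotient of $M_i$.

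Now the key observation is that $\maxId$ lies entirely in degree $0$, so the relation $M/N = \maxId(M/N)$ holds degreewise: for every $i \in \mathbb{Z}$ we have $(M/N)_i = \maxId \cdot (M/N)_i$. Since $M_i$ is finitely generated over $R$ by assumption, so is its quotient $(M/N)_i$, and the classical Nakayama lemma (applicable because $R$ is local) forces $(M/N)_i = 0$. As this holds for every $i$, we conclude $M/N = 0$, i.e., $g_1, \ldots, g_m$ generate $M$ as an $R$-module.

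There is no real obstacle here; the only point requiring a bit of care is to notice that even though $M$ itself may fail to be finitely generated over $R$, the finite-generation of each graded piece $M_i$ is exactly what is needed for Nakayama to apply in a fixed degree, and that the reduction to a degreewise statement is legitimate because $\maxId \subseteq R = S_0$ respects the grading.
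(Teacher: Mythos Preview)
Your proof is correct and follows essentially the same approach as the paper's: define the submodule $N$ generated by the $g_j$, use that the grading is preserved since $\maxId \subseteq S_0$, and apply the classical Nakayama lemma in each degree separately, using that $M_i$ (hence $(M/N)_i$) is finitely generated over $R$. The only cosmetic difference is that the paper phrases the degreewise step as ``the $g_j$ of degree $i$ generate $M_i/\maxId M_i$, hence $M_i$'' rather than passing to the quotient $M/N$, but the content is identical.
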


\begin{proof}
Let $N$ be the $R$-module generated by $g_1, \ldots, g_m$. For $i \in \mathbb{Z}$ we write $N_i \coloneqq M_i \cap N$. By assumption we have $M/\maxId M=N/\maxId N$ and therefore $M_i/\maxId M_i=N_i/\maxId N_i$ for each $i \in \mathbb{Z}$. Each $N_i$ is again a finitely generated $R$-module, generated by some of the elements $g_1, \ldots, g_m$. The classes of these generators then generate $M_i/\maxId M_i=N_i/\maxId N_i$ as an $F$-vector space. Since $M_i$ is finitely generated as an $R$-module, Nakayama's lemma now implies that $M_i=N_i$. Since this holds for every $i$, we obtain $M=N$.
\end{proof}

\begin{lemma} \label{ParaSysLemma2}
Let $R$ be a local ring with maximal ideal $\maxId$ and $F \coloneqq R/\maxId$. Let $S$ be a finitely generated graded $R$-algebra and let $f_1, \ldots, f_n \in S$ be homogeneous elements and $A \coloneqq R[f_1, \ldots, f_n]$. Moreover, set $S_F \coloneqq S \otimes_R F$ and $A_F \coloneqq A \otimes_R F=F[\overline{f}_1, \ldots, \overline{f}_n]$ where $\overline{f}_i$ denotes the class of $f_i$ over $F$. If $S_F$ is finitely generated as an $A_F$-module, then $S$ is finitely generated as an $A$-module.
\end{lemma}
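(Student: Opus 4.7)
The plan is to lift a set of module generators from $S_F$ over $A_F$ to $S$ over $A$, and then use the graded Nakayama-style result from \cref{ParaSysLemma1}. First, I would pick finitely many homogeneous elements $g_1, \ldots, g_m \in S$ whose classes $\overline{g}_1, \ldots, \overline{g}_m$ in $S_F$ generate $S_F$ as an $A_F$-module. This is possible because $S_F = S \otimes_R F$ is a graded $A_F$-module and finitely generated by hypothesis, so by taking homogeneous components of any finite generating set we obtain finitely many homogeneous generators. The claim to prove is then that $g_1, \ldots, g_m$ already generate $S$ as an $A$-module.

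Next, I would introduce the set of homogeneous elements
\[ U \coloneqq \{ f_1^{e_1} \cdots f_n^{e_n} g_j \mid e_1, \ldots, e_n \in \mathbb{N},\ 1 \leq j \leq m \} \subseteq S. \]
Since $A_F = F[\overline{f}_1, \ldots, \overline{f}_n]$ is spanned as an $F$-vector space by the monomials in the $\overline{f}_i$, and the $\overline{g}_j$ generate $S_F$ over $A_F$, the classes in $S_F = S/\maxId S$ of the elements of $U$ span $S/\maxId S$ as an $F$-vector space. On the other hand, because $S$ is a finitely generated graded $R$-algebra (by homogeneous elements), each graded piece $S_i$ is a finitely generated $R$-module. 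Thus the hypotheses of \cref{ParaSysLemma1} are satisfied with $M = S$ viewed as a graded module over itself.

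Applying \cref{ParaSysLemma1}, the set $U$ generates $S$ as an $R$-module. Every element of $U$ visibly lies in $A g_1 + \cdots + A g_m$, so $S = A g_1 + \cdots + A g_m$, which shows that $S$ is finitely generated as an $A$-module.

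The main subtlety is that \cref{ParaSysLemma1} only concludes generation as an $R$-module, whereas we want generation as an $A$-module. This is what forces the passage from the finite set $\{g_1, \ldots, g_m\}$ to the infinite set $U$ of all $A$-translates: we must inflate the candidate generating set so that its reductions actually span $S/\maxId S$ as an $F$-vector space (not merely as an $A_F$-module), in order to be in a position to invoke the lemma. Verifying that each $S_i$ is finitely generated over $R$ is routine from the assumption that $S$ is a finitely generated graded $R$-algebra.
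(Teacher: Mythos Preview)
Your argument is correct in spirit and very close to the paper's, but one technical point deserves a flag: \cref{ParaSysLemma1} is stated only for a \emph{finite} sequence $g_1,\ldots,g_m$ of homogeneous elements, whereas your set $U$ is infinite. The proof of \cref{ParaSysLemma1} does in fact go through for an arbitrary set of homogeneous generators---the only thing used is Nakayama for each finitely generated $M_i$ with the submodule $N_i$, and this needs no finiteness assumption on $N_i$---so the gap is easily closed, but you should say so explicitly rather than asserting that ``the hypotheses of \cref{ParaSysLemma1} are satisfied.''

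The paper avoids this issue by organizing the argument differently: it first invokes \cref{GradedNakayama} to reduce the problem to showing that $M \coloneqq S/A_+S$ is a finitely generated $R$-module, and then applies \cref{ParaSysLemma1} to this $M$, whose reduction $M/\maxId M \cong S_F/(A_F)_+S_F$ is a finite-dimensional $F$-vector space by hypothesis, so a finite set of generators suffices. In effect your route collapses the two lemmas into a single application of an infinite-set version of \cref{ParaSysLemma1}, while the paper uses each lemma in its stated (finite) form. Both approaches are valid and rest on the same ideas.
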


\begin{proof}
By \cref{GradedNakayama} it is sufficient to prove that $M \coloneqq S/A_+S$ is a finitely generated $R$-module. Since $S$ is a finitely generated graded $R$-algebra, for every $i \in \mathbb{N}$ the degree-$i$-part of $S$ is a finitely generated $R$-module and therefore the same holds for the degree-$i$-part of $M$. We have $M/ \maxId M= S/(A_+ \cup \maxId)S=S_F/(A_F)_+$ and since $S_F$ is a finitely generated $A_F$-module, $S_F/(A_F)_+$ is a finitely generated $F \cong A_F/(A_F)_+$-vector space, so $M/\maxId M$ is a finitely generated $R$-module. Therefore \cref{ParaSysLemma1} implies that $M$ is indeed a finitely generated $R$-module.
\end{proof}

Now we are ready to prove the following result, which gives a condition when a subset of a ring of invariants over a local ring is a homogeneous system of parameters:

\begin{theorem} \label{ParaSysThm}
Let $R$ be a Noetherian local ring with maximal ideal $\maxId$ and $G \subseteq Gl_n(R)$ a finite group; set $F \coloneqq R / \maxId$, $S \coloneqq R[x_1, \ldots, x_n]$, and $S_F \coloneqq F[x_1, \ldots, x_n]=S \otimes_R F$. Moreover, let $f_1, \ldots, f_n$ be a sequence of homogeneous elements of $S^G$. If the classes $\overline{f}_1, \ldots, \overline{f}_n$ of $f_1, \ldots, f_n$ in $S_F$ form a homogeneous system of parameters in $S^G_F$, then $f_1, \ldots, f_n$ form a homogeneous system of parameters in $S^G$.
\end{theorem}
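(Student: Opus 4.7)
The plan is to verify the two defining conditions of a homogeneous system of parameters: algebraic independence of $f_1, \ldots, f_n$ over $R$, and finite generation of $S^G$ as a module over $A \coloneqq R[f_1, \ldots, f_n]$.

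For finite generation I would route the argument through \cref{ParaSysLemma2} applied to the ambient polynomial ring $S$ rather than directly to $S^G$. The hypothesis required there is that $S_F = F[x_1, \ldots, x_n]$ be finitely generated as an $A_F$-module. I would establish this in two steps: first, $S_F$ is finitely generated as a module over $(S_F)^G$, because each $x_i$ is integral over the invariant ring, being a root of the monic polynomial $\prod_{\sigma \in G}(y - \sigma(x_i))$, whose coefficients are $G$-invariant; and second, $(S_F)^G$ is by the HSOP hypothesis finitely generated over $A_F = F[\overline{f}_1, \ldots, \overline{f}_n]$. Combining these gives $S_F$ finitely generated over $A_F$, and \cref{ParaSysLemma2} then yields that $S$ is finitely generated over $A$. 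Since $R$ is Noetherian, $A$ is a Noetherian ring, so the $A$-submodule $S^G \subseteq S$ is itself finitely generated over $A$.

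For algebraic independence, suppose for contradiction there is a nonzero polynomial $P = \sum_\alpha a_\alpha y^\alpha \in R[y_1, \ldots, y_n]$, which we may take to be homogeneous with respect to the grading $\deg y_i = \deg f_i$, satisfying $P(f_1, \ldots, f_n) = 0$. The naive reduction modulo $\maxId$ may yield the zero polynomial when all $a_\alpha \in \maxId$, so I would invoke Krull's intersection theorem, which gives $\bigcap_k \maxId^k = 0$, to choose the largest integer $k$ such that every coefficient $a_\alpha$ lies in $\maxId^k$; such a $k$ exists because some $a_\alpha$ is nonzero. Reducing the identity modulo $\maxId^{k+1}$ places it inside the $S_F$-module $\maxId^k S / \maxId^{k+1} S$, which as an $S_F$-module is canonically isomorphic to $(\maxId^k / \maxId^{k+1}) \otimes_F S_F$. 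In this tensor product we obtain $\sum_\alpha \overline{a_\alpha} \otimes \overline{f}^\alpha = 0$, where by the choice of $k$ not all $\overline{a_\alpha} \in \maxId^k / \maxId^{k+1}$ vanish. Expanding each $\overline{a_\alpha}$ with respect to an $F$-basis of $\maxId^k / \maxId^{k+1}$ and extracting a nonzero coordinate gives a nontrivial polynomial relation among $\overline{f}_1, \ldots, \overline{f}_n$ over $F$ inside $S_F$, contradicting the algebraic independence half of the HSOP hypothesis.

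The step I expect to be the main obstacle is the algebraic independence argument, because the straightforward idea of reducing modulo $\maxId$ fails whenever the relation's coefficients are nonunits; genuine input from the $\maxId$-adic filtration together with Krull's intersection theorem is needed. The finite generation portion, by contrast, is essentially a combination of \cref{ParaSysLemma2} with the standard integrality of $F[x_1, \ldots, x_n]$ over its ring of invariants.
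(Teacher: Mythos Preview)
Your finite-generation argument is exactly the paper's: apply \cref{ParaSysLemma2} to $S$ (not to $S^G$, since $S^G \otimes_R F$ need not equal $S_F^G$), using that $S_F$ is integral over $S_F^G$ and that $S_F^G$ is finite over $A_F$ by hypothesis; then pass to the submodule $S^G$ via Noetherianity of $A$. Your proposal is correct and, on this main point, identical in route to the paper.

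The one difference is that you also supply a proof of algebraic independence of $f_1,\ldots,f_n$ over $R$, which the paper's proof does not address explicitly. Your argument via Krull's intersection theorem and the identification $\maxId^k S/\maxId^{k+1}S \cong (\maxId^k/\maxId^{k+1})\otimes_F S_F$ (valid because $S$ is free over $R$) is correct and handles precisely the obstruction you identify, namely that a nontrivial $R$-relation could have all coefficients in $\maxId$. So your write-up is, if anything, more complete on this point than the paper's.
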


\begin{proof}
Let $A \coloneqq R[f_1, \ldots, f_n]$ and $A_F \coloneqq A \otimes_R F = F[\overline{f}_1, \ldots, \overline{f}_n]$. By assumption $S^G_F$ is a finitely generated $A_F$-module. It is well-known that $S_F$ is integral over $S^G_F$, so $S_F$ is also a finitely generated $A_F$-module. By \cref{ParaSysLemma2} this implies that $S$ is a finitely generated $A$-module. Since $A$ is Noetherian, $S^G$ is also a finitely generated $A$-module.
\end{proof}

Note that in the preceding proof \cref{ParaSysLemma2} cannot be applied directly to $S^G$ since it is in general not true that $S^G_F = S^G \otimes_R F$.

The following result also appeared in \cite[Corollary 7.38]{dissMundelius}, but using \cref{ParaSysThm} we can give a much more elementary proof for it:

\begin{corollary} \label{ExistParaSys}
Let $R$ be a noetherian local ring with maximal ideal $\maxId$ and set $F \coloneqq R/\maxId$ and $S\coloneqq R[x_1, \ldots, x_n]$. Let $G \subseteq Gl_n(R)$ be a finite group. Then $S^G$ contains a homogeneous system of parameters.
\end{corollary}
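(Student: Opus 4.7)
The plan is to apply \cref{ParaSysThm}, so it suffices to exhibit $n$ homogeneous elements $f_1, \ldots, f_n \in S^G$ whose images in $S^G_F = (S_F)^G$ form a homogeneous system of parameters of that ring.

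As a first step I would construct an explicit subring of $S^G$ that is defined over $R$ and over which $S$ is finite. For each $i \in \{1, \ldots, n\}$, the polynomial
\[ \prod_{\sigma \in G}(t - \sigma(x_i)) = t^{|G|} + \sum_{j=1}^{|G|} (-1)^j p_{i,j} t^{|G|-j} \in S^G[t] \]
has coefficients $p_{i,j} \in S^G$ (the elementary symmetric polynomials in the orbit of $x_i$) which are homogeneous of degree $j$; substituting $t=x_i$ shows that each $x_i$ is integral over the subring $T := R[\{p_{i,j}\}] \subseteq S^G$. Reducing modulo $\maxId$, the same relation shows that $S_F$ is integral and hence finite over $T_F := T \otimes_R F = F[\{\overline{p}_{i,j}\}] \subseteq (S_F)^G$, so that $\dim T_F = n$.

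Next I would apply the classical graded Noether normalization lemma to the finitely generated $\mathbb{N}$-graded $F$-algebra $T_F$ of dimension $n$ to produce a homogeneous system of parameters $g_1, \ldots, g_n$ of $T_F$. Since $T_F$ is generated in the weighted graded sense by the $\overline{p}_{k,\ell}$, each homogeneous $g_i$ can be written as an $F$-linear combination of monomials in these generators of matching weighted degree; lifting the coefficients to $R$ then yields homogeneous $f_i \in T \subseteq S^G$ of the same degree as $g_i$ with $\overline{f}_i = g_i$. To verify the hypothesis of \cref{ParaSysThm} one observes that $(S_F)^G$ is a $T_F$-submodule of the finite $T_F$-module $S_F$, and hence is itself finite over $T_F$ and therefore over $F[g_1, \ldots, g_n]$, while the algebraic independence of $g_1, \ldots, g_n$ over $F$ is inherited from their being a HSOP of $T_F$.

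I expect the main obstacle to be the lifting step. In general a $G$-invariant element of $S_F$ need not be the reduction of a $G$-invariant element of $S$ of the same degree; the usual averaging trick fails when $|G|$ is not invertible in $R$, reflecting a non-vanishing group-cohomology obstruction. The workaround is to apply Noether normalization not to $(S_F)^G$ itself but to the smaller subring $T_F$, whose generators are by construction the reductions of explicit elements of $S^G$, so that any polynomial expression over $F$ in them lifts tautologically to a polynomial expression over $R$.
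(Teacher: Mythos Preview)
Your argument is correct. Both your proof and the paper's proof reduce to \cref{ParaSysThm}, so the only real content is the lifting step: how to produce homogeneous invariants in $S^G$ whose reductions form a homogeneous system of parameters in $S_F^G$. Here the two proofs diverge.

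The paper lifts via the orbit product. It takes an arbitrary homogeneous system of parameters $\overline{g}_1,\ldots,\overline{g}_n$ of $S_F^G$, chooses any homogeneous lifts $g_j\in S$ (not necessarily invariant), and sets $f_j\coloneqq\prod_{\sigma\in G}\sigma(g_j)\in S^G$. Since each $\overline{g}_j$ is already $G$-invariant, one has $\overline{f}_j=\overline{g}_j^{\,|G|}$, and the powers $\overline{g}_1^{\,|G|},\ldots,\overline{g}_n^{\,|G|}$ again form a homogeneous system of parameters. This is shorter and avoids introducing the auxiliary ring $T$.

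Your route instead confines Noether normalization to the subring $T_F=F[\{\overline{p}_{i,j}\}]$, whose generators are reductions of explicit elements of $S^G$, so that lifting is tautological. This is slightly more machinery but makes the obstruction you identify (that an arbitrary element of $S_F^G$ need not lift to $S^G$ when $|G|$ is not invertible) entirely transparent. One small point of care: the equality $T\otimes_R F=F[\{\overline{p}_{i,j}\}]$ is not literally justified, since $T$ need not be $R$-flat; what you actually use is the \emph{image} of $T$ in $S_F$, which is exactly $F[\{\overline{p}_{i,j}\}]$. With that reading your argument goes through. Neither approach gives better degree control here; the degree bound in \cref{ParaSysDegBound} comes from a separate construction (Dade's) that is closer in spirit to the paper's orbit-product trick.
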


\begin{proof}
Let $g_1, \ldots, g_n \in S$ be homogeneous elements such that their classes $\overline{g}_1, \ldots, \overline{g}_n \in S_F \coloneqq F[x_1, \ldots, x_n]$ are invariants which form a homogeneous system of parameters in $S_F^G$. This is possible since every finitely generated graded algebra over a field contains a homogeneous system of parameters, see e.g. \cite[Corollary 2.5.8]{dk}. For each $j=1, \ldots, n$ we set $f_j \coloneqq \prod_{\sigma \in G} \sigma(g_j) \in S^G$. Since $\overline{g}_j$ is already invariant, the classes of $f_1, \ldots, f_n$ in $S_F^G$ are $\overline{g}_1^{|G|}, \ldots, \overline{g}_n^{|G|}$, which also form a homogeneous system of parameters. Now it follows from \cref{ParaSysThm} that $f_1, \ldots, f_n$ form a homogeneous system of parameters in $S^G$.
\end{proof}

For the proof of our main theorem we need a bound on the degrees of the elements of the system of parameters; this is possible with one additional assumption:

\begin{corollary} \label{ParaSysDegBound}
Let $R$ be a noetherian local ring with maximal ideal $\maxId$ and set $F \coloneqq R/\maxId$  and $S \coloneqq R[x_1, \ldots, x_n]$. Let $G \subseteq Gl_n(R)$ be a finite group. If $F$ has infinitely many elements, then $S^G$ contains a homogeneous system of parameters consisting of elements of degree at most $|G|$.
\end{corollary}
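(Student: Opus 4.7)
My plan is to apply \cref{ParaSysThm} to norm-products of suitably chosen linear forms, reducing the statement to the well-known existence, over an infinite field, of a homogeneous system of parameters of degree $|G|$ for the invariant ring. Concretely, it suffices to exhibit homogeneous elements $f_1, \ldots, f_n \in S^G$ of degree at most $|G|$ whose reductions modulo $\maxId$ form a homogeneous system of parameters in $S_F^G$; this is exactly the hypothesis of \cref{ParaSysThm}.

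Since $F$ is infinite, I would invoke Dade's classical construction (see e.g.\ \cite{dk}) to obtain linear forms $\overline{\ell}_1, \ldots, \overline{\ell}_n \in S_F$ such that the $G$-orbit products
\[ \overline{f}_i \coloneqq \prod_{\sigma \in G} \sigma(\overline{\ell}_i) \in S_F^G \]
form a homogeneous system of parameters in $S_F^G$, each of degree exactly $|G|$. The genuine content of the degree bound is concentrated in this step, and this is precisely where the infinitude of $F$ is used, in order to choose sufficiently generic linear forms whose orbit products cut out only the origin.

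To finish, I would pick any linear forms $\ell_1, \ldots, \ell_n \in S$ lifting the $\overline{\ell}_i$ and set $f_i \coloneqq \prod_{\sigma \in G} \sigma(\ell_i) \in S^G$. Since $G$ acts by linear substitution on $x_1, \ldots, x_n$, each $\sigma(\ell_i)$ is again a linear form in $S$, so $f_i$ is homogeneous of degree $|G|$; moreover the reduction of $f_i$ modulo $\maxId$ is the product of the reductions of the $\sigma(\ell_i)$, which is $\overline{f}_i$. Thus \cref{ParaSysThm} applies to the sequence $f_1, \ldots, f_n$ and yields the desired homogeneous system of parameters of $S^G$ consisting of elements of degree $|G|$. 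The only non-formal ingredient in the argument is Dade's theorem; everything else is made purely mechanical by \cref{ParaSysThm}, which has already absorbed the passage from $F$ back to $R$.
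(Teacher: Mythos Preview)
Your proposal is correct and follows essentially the same route as the paper: choose linear forms whose reductions over $F$ satisfy Dade's genericity condition, take their $G$-orbit products to obtain degree-$|G|$ invariants, and apply \cref{ParaSysThm}. The only cosmetic difference is that the paper selects the linear forms $g_j \in S$ directly so that their images $\overline{g}_j$ are in generic position, whereas you first pick $\overline{\ell}_i \in S_F$ and then lift; these are equivalent formulations of the same argument.
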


\begin{proof}
Since $F$ is infinite, we can choose homogeneous elements $g_1, \ldots, g_n \in S$ of degree one such that for each $j$ the class $\overline{g}_j$ of $g_j$ in $S_F=F[x_1, \ldots, x_n]$ is not contained in the $F$-vector space generated by all $\sigma(\overline{g}_k)$ with $\sigma \in G$ and $1 \leq k < j$. Furthermore we set $f_j \coloneqq \prod_{\sigma \in G} \sigma(g_j)$. Then $f_j \in S^G$ for each $j$ and a classical result of Dade, see e.g. \cite[Proposition 3.5.2]{dk}, shows that the classes of $g_1, \ldots, g_n$ in $S_F$ form a homogeneous system of parameters in $S_F^G$. Now each $f_j$ is homogeneous of degree $|G|$, and \cref{ParaSysThm} shows that these elements form a system of parameters.
\end{proof}

\section{Some local cohomology modules} \label{SectionLocCohom}

We start this section with a basic lemma:

\begin{lemma} \label{GradedExtensionRadicals}
Let $R$ be a ring and let $A \subseteq B$ be an integral extension of graded $R$-algebras. Then for every $a \in R$ we have the following equality of ideals in $B$: $\sqrt{(A_+ +(a)) B}=\sqrt{B_+ +(a)}$.
\end{lemma}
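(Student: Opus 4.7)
The plan is to prove the equality by establishing the two inclusions separately; the first is a formal consequence of the inclusion of graded rings, while the second is the real content of the lemma and rests on a standard graded-integrality argument.

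For $\sqrt{(A_+ + (a))B} \subseteq \sqrt{B_+ + (a)}$, I will simply observe that since $A \hookrightarrow B$ is a homogeneous inclusion, $A_+ \subseteq B_+$, hence $(A_+ + (a))B \subseteq B_+ + (a)$, and taking radicals yields the claim.

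For the reverse inclusion I will reduce to showing that $B_+ \subseteq \sqrt{A_+ B}$; combined with the trivial inclusion $(a) \subseteq (A_+ + (a))B$ this gives $B_+ + (a) \subseteq \sqrt{(A_+ + (a))B}$, and the desired inclusion follows by taking radicals. To prove $B_+ \subseteq \sqrt{A_+ B}$, I pick a homogeneous $b \in B$ of positive degree $d$. By integrality there is a relation $b^n + c_1 b^{n-1} + \cdots + c_n = 0$ with $c_i \in A$. Since $B$ is graded and $b$ is homogeneous, extracting the degree-$nd$ component of this equation allows me to replace each $c_i$ by its homogeneous component $c_i'$ of degree $id$; as $d > 0$ we have $c_i' \in A_+$, so $b^n = -\sum_{i=1}^n c_i' b^{n-i} \in A_+ B$. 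Since $B_+$ is generated as an ideal by homogeneous elements of positive degree, this gives $B_+ \subseteq \sqrt{A_+ B}$, as needed.

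The main (and essentially only) nontrivial point is extracting a homogeneous integral-dependence relation from an arbitrary one, which is the standard trick for integral extensions of graded rings. The element $a \in R$ itself plays a purely formal role: the core content of the lemma is the equality $\sqrt{A_+ B} = \sqrt{B_+}$ for any integral graded extension, and adding $(a)$ to both sides inside the radical is harmless since $a$ is already contained in both ideals after application of the radical on the right-hand side.
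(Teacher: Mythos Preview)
Your proof is correct and follows essentially the same approach as the paper: both reduce to showing that every homogeneous $f\in B_+$ lies in $\sqrt{A_+B}$ by taking an integral relation and passing to its homogeneous component of the appropriate degree. You are merely more explicit about the easy inclusion and the reduction step, and your closing remark that $a$ plays a purely formal role mirrors the paper's own comment following the proof.
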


\begin{proof}
It is sufficient to show that every homogeneous element $f \in B_+$ is in $\sqrt{A_+ B}$. Let $d \coloneqq \deg(f)>0$. Since $A \subseteq B$ is integral we have elements $a_0, \ldots, a_{n-1} \in A$ such that $f^n=a_{n-1} f^{n-1} + \ldots a_1 f +a_0$. This equality remains valid if each $a_i$ is replaced by its homogeneous part of degree $d \cdot (n-i)$. Then in particular each $a_i$ is in $A_+$, so we get $f^n \in A_+ B$ and therefore $f \in \sqrt{A_+ B}$.
\end{proof}

Note that the element $a$ does not play any essential role in this proof; we need the lemma with arbitrary $a$ below, so it is given in this generality.

For the rest of this section, we fix the following notation: let $R$ be a Noetherian local ring with an element $a \in R$ which is neither a unit nor a zero divisor; write $R_a \coloneqq R[\frac{1}{a}]$. Let moreover $G \subseteq Gl_n(R)$ be a finite group. We set $S \coloneqq R[x_1, \ldots, x_n]$ and $S_a \coloneqq R_a[x_1, \ldots, x_n]$. Finally, let $A \subseteq S^G$ be an $R$-subalgebra of $S^G$ generated by a homogeneous system of parameters. The goal of this section is to study the local cohomology modules $H^i_{A_+}(S^G)$ in the case where $S^G$ is a Cohen-Macaulay ring. We start with an auxiliary result:

\begin{lemma} \label{LocCohomLemma}
With the notation as above we have homogeneous isomorphisms of graded local cohomology modules $H^i_{A_+ +(a)}(S^G) \cong H^i_{S^G_+ +(a)}(S^G)$ and $H^i_{A_+}(S^G[\frac{1}{a}]) \cong H^i_{(S^G_a)_+}(S^G_a)$ for all $i \in \mathbb{N}$.
\end{lemma}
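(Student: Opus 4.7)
My plan is to reduce both isomorphisms to \cref{GradedExtensionRadicals}, using two standard and well-known facts about local cohomology: (i) $H^i_I(M)$ depends only on the radical of $I$, and (ii) if $\varphi \colon A \to B$ is a ring homomorphism and $M$ is a $B$-module, then $H^i_I(M) \cong H^i_{IB}(M)$, where the left side is computed with $M$ viewed as an $A$-module. Both facts respect gradings when everything is graded. The crucial preliminary observation is that, since $A$ is generated by a homogeneous system of parameters of $S^G$, the ring $S^G$ is finitely generated as an $A$-module, hence $A \subseteq S^G$ is an integral extension of graded $R$-algebras, and \cref{GradedExtensionRadicals} is applicable.

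For the first isomorphism, I would apply \cref{GradedExtensionRadicals} directly with $B = S^G$ and the given element $a$, obtaining the equality $\sqrt{(A_+ + (a))S^G} = \sqrt{S^G_+ + (a)}$ of homogeneous ideals in $S^G$. Combining (ii) applied to the inclusion $A \hookrightarrow S^G$ with (i) applied inside $S^G$ then produces $H^i_{A_+ + (a)}(S^G) \cong H^i_{(A_+ + (a))S^G}(S^G) \cong H^i_{S^G_+ + (a)}(S^G)$ in a single chain of homogeneous isomorphisms.

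For the second isomorphism the only additional input is to verify that, after inverting $a$, we are again in the setup of \cref{GradedExtensionRadicals}. Using \cref{InvRingFlat} with the flat $R$-algebra $R_a$ yields $S^G[\tfrac{1}{a}] = S^G \otimes_R R_a \cong S^G_a$, and $A_a \coloneqq A \otimes_R R_a = R_a[f_1, \ldots, f_n]$ is a graded $R_a$-subalgebra of $S^G_a$ over which $S^G_a$ is integral, since integrality is preserved by localization. Applying \cref{GradedExtensionRadicals} to the extension $A_a \subseteq S^G_a$ with $0 \in R_a$ in place of $a$ gives $\sqrt{(A_a)_+ S^G_a} = \sqrt{(S^G_a)_+}$. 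Since $(A_a)_+ = A_+ \cdot A_a$, and therefore $A_+ S^G_a = (A_a)_+ S^G_a$, combining this with (i) and (ii) produces $H^i_{A_+}(S^G[\tfrac{1}{a}]) \cong H^i_{A_+ S^G_a}(S^G_a) \cong H^i_{(S^G_a)_+}(S^G_a)$.

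I do not expect a substantial obstacle; the argument is really just \cref{GradedExtensionRadicals} together with standard bookkeeping. The only minor subtlety is tracking over which ring each local cohomology module is computed, particularly in the second part where I must pass between the $A$-, $A_a$- and $S^G_a$-module structures on $S^G[\tfrac{1}{a}]$ via (ii); this is made transparent by the identification $S^G[\tfrac{1}{a}] \cong S^G_a$ coming from \cref{InvRingFlat}.
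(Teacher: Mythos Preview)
Your proposal is correct and follows essentially the same approach as the paper: both use the Graded Independence Theorem (your fact (ii)), invariance of local cohomology under radicals (your fact (i)), and \cref{GradedExtensionRadicals}, together with \cref{InvRingFlat} to identify $S^G[\tfrac{1}{a}]$ with $S_a^G$. The only cosmetic difference is in the second isomorphism: the paper factors through $S^G$ (applying \cref{GradedExtensionRadicals} to $A\subseteq S^G$ and then passing to $S_a^G$ via a second application of independence), whereas you localize first and apply \cref{GradedExtensionRadicals} to $A_a\subseteq S_a^G$; both routes are equivalent.
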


\begin{proof}
By the Graded Independence Theorem for local cohomology (see \cite[Theorem 14.1.7]{bs}) we have a homogeneous isomorphism 
\[ H^i_{A_+ +(a)}(S^G) \cong H^i_{(A_+ +(a))S^G}(S^G) =H^i_{\sqrt{(A_+ +(a))S^G}}(S^G). \] By \cref{GradedExtensionRadicals} we have $\sqrt{(A_+ +(a))S^G}=\sqrt{S^G_+ +(a)}$, so the first claimed isomorphism follows.

For the second isomorphism we first note that by \cref{InvRingFlat} we have $S^G[\frac{1}{a}]=(S_a)^G$. Using the Graded Independence Theorem and \cref{GradedExtensionRadicals} we obtain as above:
\[ H^i_{A_+}(S_a^G) \cong H^i_{A_+ S^G}(S_a^G)=H^i_{\sqrt{A_+ S^G}}(S_a^G)=H^i_{S^G_+}(S_a^G). \]
Now using the Graded Independence Theorem again we obtain
\[ H^i_{S^G_+}(S_a^G)=H^i_{ (S^G[\frac{1}{a}])_+}(S_a^G)=H^i_{(S_a^G)_+}(S_a^G). \]
By putting everything together, the second claim follows.
\end{proof}

With this lemma we can prove some properties of the local cohomology modules $H^i_{A_+}(S^G)$ which we will need in the next section:

\begin{theorem} \label{LocCohom}
With the notation introduced before \cref{LocCohomLemma} assume in addition that $S^G$ is a Cohen-Macaulay ring. Then $H^i_{A_+}(S^G)=0$ for all $i \neq n$ and we have a homogeneous injective map $H^n_{A_+}(S^G) \to H^n_{(S^G_a)_+}(S^G_a)$.
\end{theorem}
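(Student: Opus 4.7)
The plan is to reduce both assertions to the statement that $f_1, \ldots, f_n$ is a regular sequence on $S^G$. Writing $d := \dim R$, the quotient $S^G/A_+S^G$ is a finitely generated $R$-module (since $S^G$ is a finitely generated $A$-module and $A_+$ is generated by the $f_i$), so its Krull dimension is at most $d$; combined with the lower bound $\dim S^G/A_+S^G \geq \dim S^G - n = d$ coming from Krull's height theorem, this shows that $(f_1,\ldots,f_n)S^G$ has height exactly $n$. Since $S^G$ is Cohen-Macaulay, an ideal of height $n$ generated by $n$ elements is automatically generated by a regular sequence; localizing at the graded maximal ideal $\mathfrak{M} := \maxId + S^G_+$ reduces this to the standard fact for local Cohen-Macaulay rings, and the graded structure (together with the $f_i$ being homogeneous of positive degree) lets us transfer regularity back to $S^G$. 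From here $H^i_{A_+}(S^G) = 0$ for $i < n$ follows from the Koszul/\v{C}ech complex on $f_1, \ldots, f_n$, and the vanishing for $i > n$ is immediate from $A_+ S^G$ being generated by $n$ elements.

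For the second statement, the localization map $S^G \to S^G[1/a]$ induces a map
\[ H^n_{A_+}(S^G) \to H^n_{A_+}(S^G[1/a]) \cong H^n_{(S^G_a)_+}(S^G_a), \]
where the isomorphism is \cref{LocCohomLemma}. By flatness of localization the displayed map is just $H^n_{A_+}(S^G) \to H^n_{A_+}(S^G)[1/a]$, which is injective exactly when $a$ is a nonzerodivisor on $H^n_{A_+}(S^G)$. Since $a$ is a nonzerodivisor on $R$ and $S^G$ is $R$-torsion-free, $a$ is a nonzerodivisor on $S^G$, so there is a short exact sequence $0 \to S^G \xrightarrow{\cdot a} S^G \to S^G/aS^G \to 0$; feeding it into the long exact sequence of local cohomology at $A_+$, the injectivity of multiplication by $a$ on $H^n_{A_+}(S^G)$ reduces to showing $H^{n-1}_{A_+}(S^G/aS^G) = 0$.

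This last vanishing is the main technical point. It would suffice to exhibit $f_1,\ldots,f_n$ as a regular sequence on $S^G/aS^G$, and by permutability of regular sequences of homogeneous elements (applied after localizing at $\mathfrak{M}$) this in turn reduces to showing that $a$ is a nonzerodivisor on $S^G/A_+S^G$. Since $S^G/A_+S^G$ inherits Cohen-Macaulayness from $S^G$, its associated primes are exactly the primes $\mathfrak{q} \subset S^G$ minimal over $A_+S^G$, each of height $n$ in $S^G$. For any such $\mathfrak{q}$ the domain $S^G/\mathfrak{q}$ is a finitely generated module over $R/(\mathfrak{q} \cap R)$ (being a quotient of the finite $R$-module $S^G/A_+S^G$), whence $d = \dim S^G/\mathfrak{q} = \dim R/(\mathfrak{q} \cap R)$, forcing $\mathfrak{q} \cap R$ to be a minimal prime of $R$. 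Because $a$ is a nonzerodivisor on $R$ it avoids every minimal prime, so $a \notin \mathfrak{q}$; as this holds for every associated prime of $S^G/A_+S^G$, the element $a$ is a nonzerodivisor there, completing the argument.
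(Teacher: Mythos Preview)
Your argument is correct but follows a genuinely different route from the paper's.

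The paper proceeds via a single long exact sequence
\[
\cdots \to H^i_{A_+ +(a)}(S^G) \to H^i_{A_+}(S^G) \to H^i_{A_+}(S^G[\tfrac{1}{a}]) \to H^{i+1}_{A_+ +(a)}(S^G) \to \cdots
\]
(this is \cite[Exercise 14.1.11]{bs}). Using \cref{LocCohomLemma} the outer terms become $H^i_{S^G_+ +(a)}(S^G)$ and $H^i_{(S^G_a)_+}(S^G_a)$; Cohen--Macaulayness then gives vanishing of the first for $i<n+1$ and of the third for $i<n$, which simultaneously forces $H^i_{A_+}(S^G)=0$ for $i<n$ and yields the injection at $i=n$. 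The case $i>n$ is handled exactly as you do.

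Your approach instead proves directly that $f_1,\ldots,f_n$ is an $S^G$-regular sequence (height $n$ plus Cohen--Macaulay), obtaining the vanishing for $i<n$; for the injectivity you run the long exact sequence associated to multiplication by $a$ and reduce to $H^{n-1}_{A_+}(S^G/aS^G)=0$, which you get by showing that $a$ is a nonzerodivisor on $S^G/A_+S^G$ via an analysis of its associated primes and the Cohen--Macaulay dimension formula. This is more hands-on: it avoids the slightly exotic exact sequence the paper quotes and makes the role of the hypothesis on $a$ very transparent, at the cost of the extra work with associated primes and the $^*$-local dimension argument. The paper's version is shorter and treats both assertions in one stroke, but leans on a heavier piece of local-cohomology machinery. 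Note, for completeness, that your assertion $\dim S^G/\mathfrak{q}=d$ tacitly uses the equality $\operatorname{ht}(\mathfrak q)+\dim S^G/\mathfrak q=\dim S^G$; this is justified after localizing at $\mathfrak M$, where $(S^G)_{\mathfrak M}$ is Cohen--Macaulay local of dimension $d+n$.
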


\begin{proof}
By \cite[Exercise 14.1.11]{bs} we have an exact sequence of graded $A$-modules
\begin{align*}
 0 &\to H^0_{A_+ +(a)}(S^G) \to H^0_{A_+}(S^G) \to H^0_{A_+}(S^G[\tfrac{1}{a}]) \\
&\to H^1_{A_+ +(a)}(S^G) \to H^1_{A_+}(S^G) \to H^1_{A_+}(S^G[\tfrac{1}{a}]) \\
&\to \ldots \\
&\to H^i_{A_+ +(a)}(S^G) \to H^i_{A_+}(S^G) \to H^i_{A_+}(S^G[\tfrac{1}{a}]) \\
&\to \ldots
\end{align*}
Using \cref{LocCohomLemma} we can rewrite this sequence as
\begin{align*}
 0 &\to H^0_{S^G_+ +(a)}(S^G) \to H^0_{A_+}(S^G) \to H^0_{(S_a^G)_+}(S_a^G) \\
&\to H^1_{S^G_+ +(a)}(S^G) \to H^1_{A_+}(S^G) \to H^1_{(S_a^G)_+}(S_a^G) \\
&\to \ldots \\
&\to H^i_{S^G_+ +(a)}(S^G) \to H^i_{A_+}(S^G) \to H^i_{(S_a^G)_+}(S_a^G) \\
&\to \ldots
\end{align*}
Since $S^G$ is a graded Cohen-Macaulay ring we have $H^i_{S^G_+ +(a)}(S^G)=0$ for all $i<\height(S^G_++(a))=n+1$ by \cite[Theorem 6.2.7]{bs}. Since $S_a^G$ is also a Cohen-Macaulay ring we get in the same way $H^i_{(S_a^G)_+}(S_a^G)=0$ for $i<n$. With the above exact sequence this yields the claims for $i \leq n$. Moreover, since the ideal $A_+$ is generated by $n$ elements, we have $H^i_{A_+}(S^G)=0$ for all $i>n$ by \cite[Theorem 3.3.1]{bs}.
\end{proof}

\section{Castelnuovo-Mumford regularity and the main result} \label{SectionMain}

Let $A$ be a graded ring and let $M=\bigoplus_{i \in \mathbb{Z}} M_i$ be a finitely generated graded $A$-module. Then one defines $\mathrm{end}(M) \coloneqq \sup \{i \in \mathbb{Z} | M_i \neq 0 \}$. Moreover, one defines the Castelnuovo-Mumford regularity of $M$ as $\reg(A,M) \coloneqq \sup_{j \in \mathbb{N}} (\mathrm{end}(H^j_{A_+}(M))+j)$. 

\begin{remark} \label{RegSubalgebra} \label{RegLocaliz} \
\begin{compactenum}[(i)]
\item If $B \subseteq A$ is a graded subalgebra such that $A$ is finitely generated as a $B$-module, then by \cref{GradedExtensionRadicals} and the Graded Independence Theorem (see \cite[Theorem 14.1.7]{bs}) we get $\reg(A,M)=\reg(B,M)$ for every finitely generated graded $A$-module $M$.
\item For a maximal ideal $\maxId \subset A_0$ we write $M_\maxId \coloneqq M \otimes_{A_0} (A_0)_\maxId$. Then clearly $\mathrm{end}(M)$ is the supremum over all $\mathrm{end}(M_\maxId)$ where $\maxId$ ranges over all maximal ideals in $A_0$. Using the Graded Flat Base Change Theorem for local cohomology (see \cite[Theorem 14.1.9]{bs}) we get that $\mathrm{end}(H^j_{A_+}(M))$ is the supremum over all $\mathrm{end}(H^j_{(A_\maxId)_+}(M_\maxId))$ and therefore $\reg(A,M)$ is the supremum over all $\reg(A_\maxId,M_\maxId)$.
\end{compactenum}
\end{remark}

\begin{theorem} \label{RegDegBound}
Let $R$ be a Noetherian ring and let $A$ be a finitely generated graded $R$-algebra which is generated by homogeneous elements $f_1, \ldots, f_n \in A_+$. Moreover, let $M$ be a finitely generated  nonnegatively graded $A$-module. Then $M$ is generated as an $A$-module by elements of degree at most $\reg(A,M)+\sum_{i=1}^n (\deg(f_i)-1)$.
\end{theorem}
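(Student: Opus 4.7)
The plan is to reduce to the case of a weighted polynomial ring over a Noetherian local ring and then argue by induction on $n$. Writing $d_i := \deg(f_i)$, I first apply \cref{RegLocaliz}(ii) to localize $R$ at a maximal ideal; since each $(M/A_+M)_i$ is a finitely generated $R$-module, it vanishes if and only if it vanishes at every maximal ideal. So the problem reduces to the case where $R$ is Noetherian local. By the graded Nakayama Lemma (\cref{GradedNakayama}), it then suffices to show $\mathrm{end}(M/A_+M) \leq \reg(A,M) + \sum_i (d_i - 1)$. Next, I consider the weighted polynomial ring $B := R[y_1, \ldots, y_n]$ (with $\deg y_i = d_i$) together with the surjection $B \twoheadrightarrow A$ sending $y_i \mapsto f_i$. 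Since $B_+ \cdot A = A_+$, the Graded Independence Theorem for local cohomology gives $H^j_{B_+}(M) = H^j_{A_+}(M)$, hence $\reg(B, M) = \reg(A, M)$; moreover $B_+M = A_+M$. Thus I may assume that $A = R[y_1,\ldots,y_n]$ is itself a weighted polynomial ring.

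I now induct on $n$. The case $n = 0$ is immediate, since $A = R$, $A_+M = 0$ and $\reg(R, M) = \mathrm{end}(M)$. For the inductive step, the key lemma is that if $y \in A_{d_n}$ is $M$-filter-regular, meaning that $(0:_M y)$ has finite length as an $A$-module, then $\reg(A, M/yM) \leq \reg(A, M) + d_n - 1$. To prove this, split
\[ 0 \to (0:_M y)(-d_n) \to M(-d_n) \xrightarrow{y} M \to M/yM \to 0 \]
into the two short exact sequences $0 \to (0:_M y)(-d_n) \to M(-d_n) \to yM \to 0$ and $0 \to yM \to M \to M/yM \to 0$, and chase the long exact sequences of local cohomology. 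A routine computation yields
\[ \reg(A, M/yM) \leq \max\{\reg(A, M) + d_n - 1,\ \reg(A, 0:_M y) + d_n - 2\}; \]
filter-regularity forces $(0:_M y) \subseteq H^0_{A_+}(M)$, giving $\reg(A, 0:_M y) = \mathrm{end}(0:_M y) \leq \reg(A, M)$, which bounds the second term by $\reg(A, M) + d_n - 2$. Viewing $M/yM$ as a module over $A/(y) \cong R[y_1,\ldots,y_{n-1}]$, which has the same regularity by graded independence, and applying the induction hypothesis then gives
\[ \mathrm{end}(M/A_+M) \leq \reg(A, M/yM) + \sum_{i < n}(d_i - 1) \leq \reg(A, M) + \sum_{i=1}^n (d_i - 1). \]

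The main obstacle is producing a filter-regular element $y \in A_{d_n}$ together with a graded change of variables that makes $y$ one of the polynomial generators of $A$, so that $A/(y)$ remains a weighted polynomial ring. By graded prime avoidance this can be arranged after a preliminary faithfully flat extension $R \to R(t) := R[t]_{\maxId[t]}$ to an infinite residue field, which preserves regularity and the vanishing of $(M/A_+M)_i$ by the Graded Flat Base Change Theorem. The delicate point to verify is that the component $A_{d_n}$ is not contained in any associated prime of $M$ other than those containing $A_+$; in a weighted polynomial ring this can fail for the chosen variable when $d_n$ is small relative to the other $d_i$, and one would then resolve it by choosing an appropriate order in which to peel off the $y_i$, or alternatively by bypassing filter-regular elements entirely and directly analyzing the double complex $\check{C}^\bullet(\mathbf{y}; K_\bullet(\mathbf{y}; M))$, whose two spectral sequences link $\Tor^A_\bullet(R, M)$ to $H^\bullet_{A_+}(M)$ and yield the same estimate.
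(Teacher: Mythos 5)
Your overall skeleton matches the paper's: bound $\reg(A,M/A_+M)$, observe that $M/A_+M$ is $A_+$-torsion so that $\mathrm{end}(M/A_+M)\leq\reg(A,M/A_+M)$, and conclude with \cref{GradedNakayama}; and your filter-regular lemma, proved by splitting the four-term sequence, is correct as far as it goes. But there is a genuine gap at the central step, which you flag yourself and never close: you establish the key inequality $\reg(A,M/yM)\leq\reg(A,M)+d_n-1$ only for a \emph{filter-regular} $y$ of the prescribed degree $d_n$, and such an element need not exist --- for \emph{any} ordering of the variables, and regardless of how large the residue field is. Concretely, take $A=R[y_1,y_2]$ with $\deg(y_1)=2$, $\deg(y_2)=3$, and $M=A/(y_1)\oplus A/(y_2)$. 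Then $A_2=R\,y_1$ and $A_3=R\,y_2$, so every homogeneous element of degree $2$ annihilates the summand $A/(y_1)$ and every homogeneous element of degree $3$ annihilates $A/(y_2)$; in both generator degrees $(0:_M y)$ contains a summand of positive dimension, so no filter-regular element of either degree exists and no ``appropriate order in which to peel off the $y_i$'' can repair the induction. Your only remaining route is the \v{C}ech--Koszul double complex $\check{C}^\bullet(\mathbf{y};K_\bullet(\mathbf{y};M))$, which is indeed viable (it is essentially the argument of \cite[Proposition 2.1]{symondsdb1}), but you only gesture at it; as written, the proof stops at an acknowledged obstruction.

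The paper sidesteps the issue entirely: it applies the inequality $\reg(A,M/aM)\leq\reg(A,M)+\deg(a)-1$ to an \emph{arbitrary} nonzero homogeneous $a\in A_+$, citing \cite[Exercise 16.2.15(iv) and Remark 14.1.10(ii)]{bs}, and simply iterates this over the actual generators $f_1,\ldots,f_n$. No genericity or filter-regularity is required, hence no localization of $R$, no extension to an infinite residue field, no change of variables, and no induction on $n$; your preliminary reductions (localizing via \cref{RegLocaliz}(ii) and replacing $A$ by the weighted polynomial ring $B$ via the Graded Independence Theorem) are correct but superfluous for this route. Note that in the example above the unconditional inequality does hold --- one computes $\reg(A,M)=-1$ and $\reg(A,M/y_1M)=0=\reg(A,M)+2-1$ --- even though filter-regularity fails, which shows that the hypothesis you imposed is strictly stronger than what the estimate needs. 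To complete your proof you must either carry out the spectral-sequence computation in full or prove (or properly invoke) the unconditional quotient inequality; until then the central step is missing.
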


This is proved in the case where $R$ is a field in \cite[Proposition 2.1]{symondsdb1} and in the case that all $f_i$ are of degree $1$ in \cite[Theorem 16.3.1]{bs}. The proof given here is similar to the one in \cite{bs}.

\begin{proof}
Let $a \in A_+$ be a nonzero homogeneous element and $d \coloneqq \deg(a)$. Then we have an exact sequence of graded $A$-modules
\[ 0 \to M(-d) \stackrel{\cdot a}{\to} M \to M/aM \to 0. \]
By \cite[Exercise 16.2.15(iv) and Remark 14.1.10(ii)]{bs} we obtain
\begin{align*} \reg(A,M/aM) &\leq \max(\reg(A,M(-d))-1,\reg(A,M)) \\ &=\max(\reg(A,M)+d-1,\reg(A,M))=\reg(A,M)+d-1. \end{align*}
Using this repeatedly, we find 
\[ \reg(A,M/A_+M) \leq \reg(A,M)+\sum_{i=1}^n (\deg(f_i)-1). \]
Since $M/A_+M$ is an $A_+$-torsion module, we have $M/A_+M \cong H^0_{A_+}(M/A_+M)$ and hence $\mathrm{end}(M/A_+M) =\mathrm{end}(H^0_{A_+}(M/A_+M)) \leq \reg(A,M/A_+M)$. In particular, $M/A_+M$ is generated by elements of degree at most $\mathrm{end}(M/A_+M) \leq \reg(A,M/A_+M) \leq \reg(A,M)+\sum_{i=1}^n (\deg(f_i)-1)$. Now the theorem follows from \cref{GradedNakayama}.
\end{proof}

The next result is essentially a rephrasing of \cref{LocCohom} in terms of Castelnuovo-Mumford regularity:

\begin{prop} \label{BoundReg1}
Let $R$ be a Noetherian local ring with an element $a \in R$ which is neither a unit nor a zero divisor and set $R_a \coloneqq R[\frac{1}{a}]$. Let moreover $G \subseteq Gl_n(R)$ be a finite group. We set $S \coloneqq R[x_1, \ldots, x_n]$ and $S_a \coloneqq R_a[x_1, \ldots, x_n]$. Assume that $S^G$ is a Cohen-Macaulay ring. Then $\reg(S^G,S^G) \leq \reg(S_a^G,S_a^G)$.
\end{prop}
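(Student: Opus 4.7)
The plan is to convert the bound on regularity into a statement about a single graded local-cohomology module $H^n_{A_+}(S^G)$, where $A \subseteq S^G$ is the $R$-subalgebra generated by a homogeneous system of parameters. Essentially all the heavy lifting has already been done in \cref{LocCohom}, so this proposition is largely a matter of assembling the previous results in the right order.

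First I would use \cref{ExistParaSys} to produce a homogeneous system of parameters $f_1, \ldots, f_n$ for $S^G$ and set $A \coloneqq R[f_1, \ldots, f_n]$. Since $S^G$ is module-finite over $A$, part (i) of \cref{RegSubalgebra} yields
\[ \reg(S^G, S^G) = \reg(A, S^G) = \sup_{j \in \mathbb{N}} \bigl( \mathrm{end}(H^j_{A_+}(S^G)) + j \bigr). \]

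Next I would invoke \cref{LocCohom}, whose vanishing statement $H^i_{A_+}(S^G) = 0$ for $i \neq n$ collapses the supremum above to the single term
\[ \reg(S^G, S^G) = \mathrm{end}(H^n_{A_+}(S^G)) + n. \]
The Cohen-Macaulay hypothesis on $S^G$ is used precisely at this step.

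Finally, the homogeneous injection $H^n_{A_+}(S^G) \hookrightarrow H^n_{(S_a^G)_+}(S_a^G)$ from \cref{LocCohom} gives the monotonicity $\mathrm{end}(H^n_{A_+}(S^G)) \leq \mathrm{end}(H^n_{(S_a^G)_+}(S_a^G))$, while by the very definition of Castelnuovo-Mumford regularity we have $\mathrm{end}(H^n_{(S_a^G)_+}(S_a^G)) + n \leq \reg(S_a^G, S_a^G)$. Chaining these inequalities yields the claim. The only subtlety worth checking is that $\mathrm{end}$ really is monotone under homogeneous injections of graded modules, which is immediate since a homogeneous injection cannot kill a nonzero homogeneous component; beyond this there is no genuine obstacle.
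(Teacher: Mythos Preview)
Your proof is correct and follows essentially the same route as the paper: choose a homogeneous system of parameters to form $A$, use \cref{RegSubalgebra}(i) to pass from $\reg(S^G,S^G)$ to $\reg(A,S^G)$, and then apply the vanishing and injection statements of \cref{LocCohom} to bound by $\reg(S_a^G,S_a^G)$. (Indeed, your version has the sign right: it should be $\mathrm{end}(H^n_{A_+}(S^G))+n$, matching the definition of regularity.)
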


\begin{proof}
Let $A \subseteq S^G$ be an $R$-subalgebra generated by a homogeneous system of parameters. Then by \cref{RegSubalgebra}(i) we have $\reg(S^G,S^G)=\reg(A,S^G)$. \cref{LocCohom} shows that 
\[ \reg(A,S^G)=\mathrm{end}(H^n_{A_+}(S^G))-n \leq \mathrm{end}(H^n_{(S_a^G)_+}(S_a^G))-n \leq \reg(S_a^G,S_a^G). \]
\end{proof}

Using this we can prove a bound on $\reg(S^G,S^G)$:

\begin{prop} \label{BoundReg2}
Let $R$ be a Noetherian integral domain. Let moreover $G \subseteq Gl_n(R)$ be a finite group. We set $S \coloneqq R[x_1, \ldots, x_n]$. Assume that $S^G$ is a Cohen-Macaulay ring. Then $\reg(S^G,S^G) \leq 0$.
\end{prop}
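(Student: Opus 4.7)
The plan is to prove the regularity bound by induction on $d \coloneqq \dim R$, using \cref{RegLocaliz}(ii) to reduce to the local situation at each step of the induction and \cref{BoundReg1} to perform the induction itself.

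For the base case $d = 0$, the ring $R$ is a field, and the bound $\reg(S^G,S^G) \leq 0$ is precisely Broer's regularity bound for Cohen-Macaulay rings of invariants over a field, i.e.\ the content of \cite[Theorem 3.9.8]{dked1}.

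Suppose now $d \geq 1$ and the proposition holds for all Noetherian integral domains of Krull dimension strictly less than $d$. By \cref{RegLocaliz}(ii) combined with \cref{InvRingFlat}, $\reg(S^G,S^G)$ is the supremum over the maximal ideals $\maxId$ of $R$ of $\reg(R_\maxId[x_1,\ldots,x_n]^G, R_\maxId[x_1,\ldots,x_n]^G)$, and each such ring is Cohen-Macaulay as a localization of $S^G$. We may therefore replace $R$ by $R_\maxId$ and assume that $R$ is a Noetherian local integral domain with maximal ideal $\maxId$. Pick any nonzero $a \in \maxId$, which exists because $\dim R \geq 1$; since $R$ is a domain, $a$ is neither a unit nor a zero divisor. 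By \cref{BoundReg1}, $\reg(S^G,S^G) \leq \reg(S_a^G,S_a^G)$, and $S_a^G = R_a[x_1,\ldots,x_n]^G$ is again Cohen-Macaulay as a localization of $S^G$. The ring $R_a$ is a Noetherian integral domain whose primes correspond bijectively to the primes of $R$ not containing $a$; every such prime is strictly contained in $\maxId$ and hence of height at most $d-1$, so $\dim R_a \leq d-1$. The inductive hypothesis applies to $R_a$ and yields $\reg(S_a^G,S_a^G) \leq 0$, which completes the induction.

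The main obstacle is guaranteeing a strict decrease of the induction parameter. This is precisely why the reduction to the local case via \cref{RegLocaliz}(ii) must be performed before invoking \cref{BoundReg1}: once $R$ is local, any nonzero element of the unique maximal ideal lies outside every other prime, forcing $\dim R_a \leq d-1$. Without the local hypothesis this strict drop may fail, so the reduction step is genuinely essential.
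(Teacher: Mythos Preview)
Your argument is essentially the paper's: induction on $\dim R$, with the field case as base, reducing to the local situation via \cref{RegLocaliz}(ii) inside each induction step so that \cref{BoundReg1} applies and inverting a nonzero $a\in\maxId$ strictly drops the dimension. (The paper cites \cite{symondsdb1} rather than \cite{dked1} for the base case, but either suffices here.)

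There is one omission. You launch the induction on $d=\dim R$ without first arranging that $\dim R<\infty$, and Noetherian integral domains of infinite Krull dimension do exist; as written, such an $R$ never enters your inductive scheme. The paper handles this by applying \cref{RegLocaliz}(ii) once at the outset to pass to the localizations $R_\maxId$, which are Noetherian local and hence finite-dimensional, and then runs the induction over the class of Noetherian integral domains of \emph{finite} Krull dimension (not necessarily local, since $R_a$ need not be local). All the ingredients for this fix are already present in your argument; you only need to insert the preliminary reduction to finite dimension before stating the induction, exactly as you yourself observe in your closing paragraph that the localization step is essential for the dimension drop.
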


\begin{proof}
Using \cref{RegLocaliz}(ii) we may reduce this to the case where $R$ is local and therefore $\dim(R)<\infty$. However, in the following we allow $R$ to be a not necessarily local ring of finite Krull dimension, because otherwise the following induction argument would not work. Namely we use induction on the dimension of $R$: in the case $\dim(R)=0$, $R$ itself must be a field; then the proposition is proved in \cite{symondsdb1}. If $\dim(R)>0$, by \cref{RegLocaliz}(ii) we may restrict ourselves to the case that $R$ is local. Choose an element $0 \neq a \in R$ which is not a unit. Then $\dim(R_a)<\dim(R)$ since $a$ must be contained in the unique maximal ideal of $R$, so the claim follows from Proposition \ref{BoundReg1} and the induction hypothesis; note that $R_a$ need not be local, so it is essential that we made the reduction to the local case only within the induction step.
\end{proof}

Recall that $\beta_R(G)$ denotes the smallest integer $k$ such that $R[x_1, \ldots, x_n]^G$ is generated by elements of degree at most $k$ as an $R$-algebra. We are now ready to prove a first bound on $\beta_R(G)$.

\begin{theorem} \label{MainParaSys}
Let $R$ be a Noetherian integral domain, $G \subseteq Gl_n(R)$ a finite group and $S \coloneqq R[x_1, \ldots, x_n]$. Assume that $S^G$ is a Cohen-Macaulay ring which contains a homogeneous system of parameters $f_1, \ldots, f_n$. Then  $S^G$ is generated as a module over $R[f_1, \ldots, f_n]$ by elements of degree at most $\sum_{i=1}^n (\deg(f_i)-1))$. Moreover,
\[ \beta_R(G) \leq \max(\deg(f_1), \ldots, \deg(f_n), \sum_{i=1}^n (\deg(f_i)-1)). \]
\end{theorem}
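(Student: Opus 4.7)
The strategy is to combine \cref{BoundReg2} (which gives $\reg(S^G,S^G)\le 0$) with \cref{RegDegBound} applied to $S^G$ viewed as a graded module over $A \coloneqq R[f_1,\ldots,f_n]$, and then to read off the algebra generation statement from the module generation statement.

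First I would set $A \coloneqq R[f_1,\ldots,f_n]$. Since $f_1,\ldots,f_n$ form a homogeneous system of parameters, $S^G$ is a finitely generated (graded) $A$-module; in particular $A \subseteq S^G$ is a finite, hence integral, extension of graded $R$-algebras, so the hypothesis of \cref{RegSubalgebra}(i) is satisfied and we obtain $\reg(A,S^G)=\reg(S^G,S^G)$. Combining this with \cref{BoundReg2} yields $\reg(A,S^G)\le 0$.

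Next, $A$ is a finitely generated graded $R$-algebra generated by the homogeneous elements $f_1,\ldots,f_n \in A_+$, and $S^G$ is a finitely generated nonnegatively graded $A$-module. Therefore \cref{RegDegBound} applies and shows that $S^G$ is generated as an $A$-module by homogeneous elements of degree at most
\[ \reg(A,S^G)+\sum_{i=1}^n (\deg(f_i)-1) \le \sum_{i=1}^n (\deg(f_i)-1), \]
which is the first assertion.

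For the second assertion, pick homogeneous $A$-module generators $h_1,\ldots,h_m \in S^G$ with $\deg(h_j) \le \sum_{i=1}^n(\deg(f_i)-1)$. Since $A=R[f_1,\ldots,f_n]$, the set $\{f_1,\ldots,f_n,h_1,\ldots,h_m\}$ generates $S^G$ as an $R$-algebra, and every element of this set has degree at most $\max(\deg(f_1),\ldots,\deg(f_n),\sum_{i=1}^n(\deg(f_i)-1))$, giving the claimed bound on $\beta_R(G)$. I do not expect a serious obstacle: the argument is essentially an assembly of \cref{BoundReg2}, \cref{RegSubalgebra}(i) and \cref{RegDegBound}. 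The only thing to be careful about is verifying the hypotheses of \cref{RegSubalgebra}(i) (integrality of $A \subseteq S^G$, which is immediate from the system-of-parameters property) and of \cref{RegDegBound} (finite generation of $S^G$ as an $A$-module and nonnegative grading, both of which are automatic here).
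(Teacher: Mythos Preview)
Your proposal is correct and follows essentially the same route as the paper: the paper's proof simply says the first claim follows from \cref{RegDegBound} and \cref{BoundReg2}, and the second is immediate since module generators of $S^G$ over $A=R[f_1,\ldots,f_n]$ together with $f_1,\ldots,f_n$ generate $S^G$ as an $R$-algebra. Your write-up just makes explicit the use of \cref{RegSubalgebra}(i) to pass from $\reg(S^G,S^G)\le 0$ to $\reg(A,S^G)\le 0$, which the paper leaves implicit here (having already used it in the proof of \cref{BoundReg1}).
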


\begin{proof}
The first claim follows from \cref{RegDegBound} and Proposition \ref{BoundReg2}. The second claim is then clear since elements which generate $S^G$ as an $R[f_1, \ldots, f_n]$-module together with $f_1, \ldots, f_n$ generate $S^G$ as an $R$-algebra.
\end{proof}

\begin{remark}
If the ring $R$ in \cref{MainParaSys} is regular local, then one can give a much simpler proof of the theorem. By \cite[§4, no. 3, Corollaire]{bourbakicax} and \cite[Proposition 1.5.15(d)]{bh} it then follows that $S^G$ is a free module over $A=R[f_1, \ldots, f_n]$. Since with $K=\Quot(R)$, $S_K=K[x_1, \ldots, x_n]$, and $A_K=K[f_1, \ldots, f_n]$ we have $S_K^G=S^G \otimes_R A_K$, it follows that a minimal generating set of $S^G$ as an $A$-module consists of elements of the same degrees as a minimal generating set of $S_K^G$ as an $A_K$-module. But over fields the theorem is well-known, see \cite{symondsdb1}. However, this does not imply that $\beta_R(G)=\beta_K(G)$ as there may be a homogeneous system of parameters for theb invariant ring over $K$ which consists of elements of smaller degrees than a homogeneous system of parameters for the invariant ring over $R$.
\end{remark}

Using this bound we can now deduce the main result:

\begin{theorem}
Let $R$ be a Noetherian integral domain, $G \subseteq Gl_n(R)$ a finite group and $S \coloneqq R[x_1, \ldots, x_n]$. If $S^G$ is a Cohen-Macaulay ring, then $\beta_R(G) \leq \max(|G|,n(|G|-1))$.
\end{theorem}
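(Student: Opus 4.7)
The plan is to combine the reduction results of \cref{SectionReduct}, the system of parameters from \cref{SectionParaSys}, and the module-generation bound of \cref{MainParaSys}. The strategy runs as follows.

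First I would reduce to the case where $R$ is a Noetherian local domain. By \cref{ReductLocal} it suffices to bound $\beta_{R_\maxId}(G)$ for every maximal ideal $\maxId \subset R$, and since $R$ is a domain so is each $R_\maxId$. The Cohen--Macaulay property of $S^G$ passes to the localization $(S^G) \otimes_R R_\maxId$, which by \cref{InvRingFlat} equals the invariant ring of $G$ on $R_\maxId[x_1,\ldots,x_n]$, so the hypotheses of the theorem are retained.

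Next I would arrange for the residue field to be infinite so that \cref{ParaSysDegBound} applies. The standard trick is to pass to the faithfully flat extension $R' \coloneqq R[t]_{\maxId R[t]}$, which is again a Noetherian local domain (because $\maxId R[t]$ is prime, since $R[t]/\maxId R[t]=F[t]$ is a domain) and whose residue field $F(t)$ is infinite. By \cref{ReductFaithful} we have $\beta_R(G)=\beta_{R'}(G)$, and by \cref{InvRingFlat} the invariant ring over $R'$ is $S^G \otimes_R R'$. The extension $R \to R'$ is flat with geometrically regular fibres (a polynomial ring followed by a localization), so the Cohen--Macaulay property is preserved; thus the invariant ring over $R'$ remains Cohen--Macaulay.

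After these reductions, \cref{ParaSysDegBound} yields a homogeneous system of parameters $f_1,\ldots,f_n$ in the invariant ring $R'[x_1,\ldots,x_n]^G$ with $\deg(f_i) \le |G|$ for every $i$. Plugging this into \cref{MainParaSys} gives
\[ \beta_{R'}(G) \le \max\!\bigl(\deg(f_1),\ldots,\deg(f_n),\; \textstyle\sum_{i=1}^n(\deg(f_i)-1)\bigr) \le \max\!\bigl(|G|,\, n(|G|-1)\bigr), \]
which, combined with $\beta_R(G)=\beta_{R'}(G)$ and the earlier reduction, yields the desired bound.

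The main obstacle is really a bookkeeping one rather than a conceptual one: one must verify that every reduction step preserves both the integral domain hypothesis and the Cohen--Macaulay property of the invariant ring simultaneously. The first is automatic for localization and for $R[t]_{\maxId R[t]}$; the second requires invoking flatness with regular fibres for the extension $R \to R'$. Once these are checked, the remainder is a direct concatenation of \cref{ReductLocal}, \cref{ReductFaithful}, \cref{ParaSysDegBound}, and \cref{MainParaSys}.
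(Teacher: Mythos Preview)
Your proposal is correct and follows essentially the same route as the paper: reduce to the local case via \cref{ReductLocal}, enlarge the residue field by passing to $R[t]_{\maxId R[t]}$ using \cref{ReductFaithful}, then apply \cref{ParaSysDegBound} and \cref{MainParaSys}. In fact you are more explicit than the paper about why the Cohen--Macaulay and integral-domain hypotheses survive each reduction step; the paper leaves these verifications tacit.
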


\begin{proof}
Note that for any maximal ideal $\maxId \subset R$ the elements $f_1, \ldots, f_n$ also form a system of parameters for the invariant ring $R_\maxId[x_1, \ldots, x_n]^G$. By \cref{ReductLocal} it is therefore sufficient to consider the case where $R$ is local with maximal ideal $\maxId$. Set $R' \coloneqq R[x]_{\maxId R[x]}$. This is a faithfully flat local $R$-algebra with an infinite residue field (see \cite[Example 16.2.4]{bs}), so by \cref{ReductFaithful} we can restrict ourselves to the case where $R$ itself has an infinite residue field.

Then \cref{ParaSysDegBound} shows that there is a system of parameters $f_1, \ldots, f_n \in S^G$ with $\deg(f_i) \leq |G|$ for each $i$. Therefore we can deduce the claim from \cref{MainParaSys}.
\end{proof}

\bibliographystyle{plainurl}
\bibliography{AInvDegBound}

\end{document}